\def\doi#1{\href{https://doi.org/\detokenize{#1}}{\url{https://doi.org/\detokenize{#1}}}}
\newcommand{\1}{\vspace{.1in}}
\newcommand{\bc}{\begin{center}}
	\newcommand{\ec}{\end{center}}
\newcommand{\n}{\noindent}
\begin{document}
	
\title{Towards Proof-Theoretic Formulation of the General Theory of Term-Forming Operators\thanks{Funded by the European Union (ERC, ExtenDD, project number: 101054714). Views and opinions expressed are however those of the author(s) only and do not necessarily reflect those of the European Union or the European Research Council. Neither the European Union nor the granting authority can be held responsible for them.
}}

\titlerunning{General Theory of Term-Forming Operators}

\author{Andrzej Indrzejczak\inst{1}\orcidID{0000-0003-4063-1651}}
	
\authorrunning{Andrzej Indrzejczak}


%

\institute{Department of Logic, University of Lodz, Poland
	\email{andrzej.indrzejczak@filhist.uni.lodz.pl}}

\maketitle              

	\begin{abstract}
		Term-forming operators (tfos), like iota- or epsilon-operator, are technical devices applied to build complex terms in formal languages. Although they are very useful in practice their theory is not well developed. In the paper we provide a proof-theoretic formulation of the general approach to tfos provided independently by several authors like Scott, Hatcher, Corcoran, and compare it with an approach proposed later by Tennant. Eventually it is shown how the general theory can be applied to specific areas like Quine's set theory NF.

			\keywords{Term-Forming Operators \and Abstraction Operator \and Definite Descriptions \and Sequent Calculus \and Quine}
		
	\end{abstract}
	
	
	\section{Introduction}
	
	In formal languages terms are usually treated as these elements of language which only refer to the objects in the domain of discourse. In particular, this way of treating terms is prevailing in proof theory and automated deduction where usually only functional terms are approved.
	In contrast, in natural languages, naming expressions are used very often not only for referring to objects but also for conveying information about them. In the earlier stages of development of mathematical logic several formal devices were introduced for this aim which currently are rather neglected. These term-forming operators,
also called shortly tfos or vbtos  (variable binding term operators), include, among others:

\begin{itemize}
\item 
iota-operator (Peano): $\imath x\varphi$ - the (only) $x$ such that $\varphi$;
\item
epsilon-operator (Hilbert): $\epsilon x\varphi$  - a(n) $x$ such that $\varphi$;
\item
abstraction-operator: $\{x:\varphi\}$ - the set of (all) $x$ satisfying $\varphi$;
\item
counting-operator (Frege): $\sharp x\varphi$ - the number of $x$ such that $\varphi$;
\item
lambda-operator (Church): $\lambda x\varphi$ - the property of being $\varphi$.
\end{itemize}

It seems that currently only the lambda-operator is treated as an important tool and found diverse applications in recursion theory, type theory and proof theory. Abstraction-operator, although commonly used in practice, is rather not treated seriously in the formal development of set theories. The remaining ones are sadly treated as formal tools having only some historical value. Since the role of complex terms as information conveying tools is crucial in communication it is important to fill this gap.

Recently, some more attention was paid to proof theory of definite descriptions. In particular, cut-free sequent calculi were provided for Fregean \cite{Indrzejczak2019}, Russellian \cite{Indrzejczak2021c} and free description theories \cite{Indrzejczak2020b}. The latter theories were also characterised in terms of tableau systems \cite{IndZaw1} and tableau calculus was also used to develop a Russelian theory in the language enriched with lambda-operator \cite{IndZaw2}. Some modal logics of definite descriptions were also developed in terms of cut-free sequent calculus \cite{Indrzejczak2018a}, in particular, the logic of Fitting and Mendelsohn \cite{FitMen98} was independently formalised as a labelled sequent calculus \cite{Orlandelli2021} and as a hybrid system \cite{Indrzejczak2020a}. Alternatively, interesting natural deduction and sequent calculi were proposed for free and intuitionistic logics of definite descriptions characterised in terms of binary quantifier \cite{Kurbis2019a,Kurbis2019b,Kurbis2021a,Kurbis2021b,Kurbis2021c}.

Since definite descriptions are amenable to proof theoretic treatment it is tempting to suspect that for other tfos we can obtain equally interesting results. Perhaps one should start with posing a question whether a general theory of such operators is possible? In fact at least two different attempts to develop such a theory were proposed.
The earlier approach was independently introduced by several authors, including: Scott \cite{scott}, Da Costa \cite{DaCosta73,DaCosta80}, Hatcher \cite{Hatcher1968,Hatcher1982}, Corcoran and Herring \cite{Corcoran71,Corcoran72}. It was formulated semantically and as an axiomatic theory. In what follows it will be called simply S-theory (after Scott). 
The second approach was introduced by Neil Tennant \cite{Tennant1978}, and then developed in \cite{Tennant2004} as a general theory of abstraction operators (see also \cite{Tennant1987,Tennant2022}). This T-theory was formulated in terms of natural deduction system and with adequate semantical characterisation. In what follows we will examine these two approaches and show how they can be formulated as well-behaved sequent calculi in section 3. Then, in section 4 we consider their specification with respect to set-abstraction operator. For this aim we focus on Quine's version of set theory NF (New Foundations) \cite{Quine} (see also \cite{Rosser}) but the proposed systems may be modified to apply to other formulations of set theory as well.

	\section{Preliminaries}

	We will be using standard first-order predicate languages with quantifiers $\forall, \exists$, identity predicate $=$ and arbitrary term-forming operator $\tau$ making complex terms from formulae of the language. The definition of a term and formula is standard, by simultaneous recursion on both categories. In the presented system the only terms are variables and complex terms constructed by means of arbitrary unary tfo $\tau$. The complex terms are written as $\tau x\varphi$ where $\varphi$ is a formula in the scope of respective operator.
	
	In accordance with Gentzen's custom we divide individual variables into bound $VAR = \{x,y,z, \ldots \}$ and free variables (parameters) $PAR = \{a, b, c, \ldots \}$. It makes easier an elaboration of some technical issues concerning substitution and proof transformations. In the metalanguage $\varphi, \psi, \chi$ denote any formulae and $\Gamma, \Delta, \Pi, \Sigma$ their multisets. Metavariables $t, t_1, \ldots$ denote arbitrary terms. $\varphi[t_1/t_2]$ is officially used for the operation of correct substitution of a term $t_2$ for all occurrences of a term $t_1$ (a variable or parameter) in $\varphi$, and similarly $\Gamma[t_1/t_2]$ for a  uniform substitution in all formulae in $\Gamma$. Ocassionally, we will use simplified notation $\varphi(t)$ to denote the result of correct substitution.
	
	First-order logic in general will be abbreviated as FOL or FOLI if identity is primitive. CFOL(I), PFFOL(I), NFFOL(I) denote the classical, positive free and negative free versions. 
	The basic system GC for CFOL consists of the following rules:
	
	\1
	
		\n\begin{tabular}{lll}
			
			$(Cut)$ \ $\dfrac{\Gamma \Rightarrow \Delta, \varphi \qquad \varphi , \Pi \Rightarrow \Sigma  }{\Gamma, \Pi \Rightarrow \Delta, \Sigma }$ &
			$(AX)$ \ $ \varphi, \Gamma  \Rightarrow \Delta, \varphi$ & \\[16pt]

			$(\neg\!\!\Rightarrow)$ \ $\dfrac{\Gamma \Rightarrow \Delta, \varphi}{\neg\varphi,
				\Gamma \Rightarrow \Delta}$ &
			$(\Rightarrow\!\!\neg)$ \ $\dfrac{\varphi, \Gamma
				\Rightarrow \Delta}{\Gamma
				\Rightarrow \Delta, \neg\varphi}$ & $(W\!\!\Rightarrow)$ \ $\dfrac{\Gamma \Rightarrow \Delta}{\varphi, \Gamma \Rightarrow \Delta}$
			\\[16pt]

			$(\Rightarrow\!\!\wedge)$ \ $\dfrac{\Gamma
				\Rightarrow \Delta, \varphi \qquad
				\Gamma \Rightarrow \Delta,
				\psi}{\Gamma \Rightarrow \Delta,
				\varphi\wedge\psi}$ & $(\wedge\!\!\Rightarrow)$ \ $\dfrac{\varphi, \psi,
				\Gamma \Rightarrow \Delta}{\varphi\wedge\psi, \Gamma
				\Rightarrow \Delta}$ & 	$(\Rightarrow\!\!W)$ \ $\dfrac{\Gamma \Rightarrow \Delta}{\Gamma\Rightarrow \Delta, \varphi}$
			\\[16pt]

			$(\vee\!\!\Rightarrow)$ \ $\dfrac{\varphi, \Gamma
				\Rightarrow \Delta \qquad \psi, \Gamma
				\Rightarrow \Delta}{\varphi\vee\psi,
				\Gamma\Rightarrow \Delta}$ &
			$(\Rightarrow\!\!\vee)$ \ $\dfrac{\Gamma
				\Rightarrow \Delta, \varphi,\psi} {\Gamma\Rightarrow \Delta,
				\varphi\vee\psi}$ & $(C\!\!\Rightarrow)$ \ $\dfrac{\varphi , \varphi , \Gamma \Rightarrow \Delta}{\varphi, \Gamma \Rightarrow \Delta}$
			\\[16pt]
			
			$(\rightarrow \Rightarrow)$ \ $\dfrac{\Gamma
				\Rightarrow \Delta, \varphi \qquad\psi,
				\Gamma \Rightarrow \Delta} {\varphi\rightarrow\psi, \Gamma
				\Rightarrow \Delta}$ &
			$(\Rightarrow \rightarrow)$ \ $\dfrac{\varphi, \Gamma
				\Rightarrow \Delta, \psi}{\Gamma\Rightarrow\Delta, \varphi\rightarrow\psi}$ & 	$(\Rightarrow\!\!C)$ \ $\dfrac{\Gamma \Rightarrow \Delta, \varphi , \varphi}{\Gamma \Rightarrow \Delta, \varphi}$ \\[16pt]

			$(\leftrightarrow\Rightarrow)$ \ $\dfrac{\Gamma \!\!\Rightarrow
				\Delta, \varphi, \psi \qquad \varphi, \psi, \Gamma \!\!\Rightarrow \Delta
			}{\varphi\!\leftrightarrow\!\psi, \Gamma \!\!\Rightarrow
				\Delta }$ & $(\forall \!\!\Rightarrow)$ \ $\dfrac{\varphi[x/t], \Gamma \!\!\Rightarrow \Delta
			}{\forall x\varphi, \Gamma \!\!\Rightarrow
				\Delta }$ & $(\Rightarrow\!\!\exists
			)$ \ $\dfrac{\Gamma \!\!\Rightarrow
				\Delta, \varphi[x/t]
			}{\Gamma \!\!\Rightarrow
				\Delta, \exists x\varphi}$ \\[16pt]

			$(\Rightarrow\leftrightarrow)$ \
			$\dfrac{\varphi, \Gamma \!\!\Rightarrow \Delta,
				\psi \qquad \psi, \Gamma \Rightarrow \Delta, \varphi}{\Gamma \!\!\Rightarrow \Delta, 
				\varphi\!\leftrightarrow\!\psi}$  &
			$(\Rightarrow\!\!\forall)$ \ $\dfrac{\Gamma \!\!\Rightarrow
				\Delta, \varphi[x/a]}{\Gamma \!\!\Rightarrow \Delta,
				\forall x\varphi}$ & 
			$(\exists\!\!\Rightarrow)$ \ $\dfrac{\varphi[x/a], \Gamma
				\!\!\Rightarrow \Delta}{\exists x\varphi, \Gamma
				\!\!\Rightarrow \Delta }$ \\[16pt]

		\end{tabular}

	\n	where $a$ is a fresh parameter (eigenvariable), not present in $\Gamma, \Delta$ and $\varphi$.

	If instead of  $(\forall \!\!\Rightarrow)$ and $(\Rightarrow\!\!\exists
	)$ we introduce:
	
	\medskip

	\n\begin{tabular}{ll}
		
		$(\forall \!\!\Rightarrow)$ \ $\dfrac{\varphi[x/b], \Gamma \!\!\Rightarrow \Delta
		}{\forall x\varphi, \Gamma \!\!\Rightarrow
			\Delta }$ & $(\Rightarrow\!\!\exists
		)$ \ $\dfrac{\Gamma \!\!\Rightarrow
			\Delta, \varphi[x/b]
		}{\Gamma \!\!\Rightarrow
			\Delta, \exists x\varphi}$ \\[16pt]
		
	\end{tabular}
	
	\n we obtain a pure variant GPC which is adequate for CFOL with variables as the only terms but in general incomplete for extensions with some tfos.

	The variant GF for PFFOL can be obtained by changing all quantifier rules into:
	
	\1
	
	\n\begin{tabular}{ll}
		
		$(\forall \!\!\Rightarrow)^F$ \ $\dfrac{\varphi[x/t], \Gamma \!\!\Rightarrow \Delta
		}{Et, \forall x\varphi, \Gamma \!\!\Rightarrow
			\Delta }$  &
		$(\Rightarrow\!\!\forall)^F$ \ $\dfrac{Ea, \Gamma \!\!\Rightarrow
			\Delta, \varphi[x/a]}{\Gamma \!\!\Rightarrow \Delta,
			\forall x\varphi}$\\[16pt]
		
		$(\exists\!\!\Rightarrow)^F$ \ $\dfrac{Ea, \varphi[x/a], \Gamma
			\!\!\Rightarrow \Delta}{\exists x\varphi, \Gamma
			\!\!\Rightarrow \Delta }$ & $(\Rightarrow\!\!\exists
		)^F$ \ $\dfrac{\Gamma \!\!\Rightarrow
			\Delta, \varphi[x/t]
		}{Et, \Gamma \!\!\Rightarrow
			\Delta, \exists x\varphi}$ \\[16pt]

	\end{tabular}
	
	\n where $E$ is the existence predicate, which is usually defined as $Et := \exists x(x=t)$. This form of rules follows from the fact that in free logics terms may designate nonexistent objects whereas quantifiers have existential import. For pure version GPF again we use $b$ instead of $t$ in $(\forall \!\!\Rightarrow)^F$ and $(\Rightarrow\!\!\exists)^F$.
	
	Moreover, in negative free logic atomic formulae with such terms are false which implies that 
	$Et \rightarrow t=t$
	and 	
	$\varphi(t)\rightarrow Et$, 
	for any atomic formula $\varphi$.
	Hence to obtain GNF (or GPNF) for NFFOL we have to add to GF (or GPF) the rule requiring all predicates to be strict in the sense that they are satisfied only by denoting terms:
	
	\1
	
	$(Str)$ \ $\dfrac{Et, \Gamma \!\!\Rightarrow \Delta
	}{\varphi(t), \Gamma \!\!\Rightarrow
		\Delta }$ \hspace{1cm} where $\varphi$ is atomic.
	
	\1
	
	Identity can be characterised in GC (GPC) and GF (GPF) in several ways (see \cite{Ind:Synt}). For our purposes we use the following rules:
	
	\1
	
	\n\begin{tabular}{ll}
	
	$(Ref)$ \ $\dfrac{t=t, \Gamma \!\!\Rightarrow
			\Delta}{\Gamma \!\!\Rightarrow\Delta}$ & \ \ $(2LL)$ \ $\dfrac{\Gamma
		\Rightarrow \Delta, t_1=t_2 \qquad
	\Gamma \Rightarrow \Delta, \varphi[x/t_1]}{\Gamma \Rightarrow \Delta,
\varphi[x/t_2]}$\\[16pt]

\end{tabular}
	
\n where $\varphi$ is atomic.

GCI, GPCI, GFI, GPFI will denote the respective calculi with the rules for identity added.
In case of NFFOLI, due to strictness condition, reflexivity does not hold unconditionally and we must weaken the first rule, using instead:

\1

	$(Ref)^N$ \ $\frac{\mbox{$t=t, \Gamma \!\!\Rightarrow
		\Delta$}}{\mbox{$Et, \Gamma \!\!\Rightarrow
		\Delta$}}$

\1

GNFI, GPNFI will denote the respective calculi for NFFOLI with the rules for identity having 
$(Ref)^N$.

Proofs are defined in the standard way as finite trees with nodes labelled by sequents. The height of a proof ${\cal D}$ of $\Gamma\Rightarrow\Delta$ is defined as the number of nodes of the longest branch in ${\cal D}$. $\vdash_k \Gamma\Rightarrow\Delta$ means that $\Gamma\Rightarrow\Delta$ has a proof with height at most $k$.
Let us recall that formulae displayed in the schemata are active, whereas the remaining ones are parametric, or form a context. In particular, all active formulae in the premisses are called side formulae, and the one in the conclusion is the principal formula of the respective rule application. 

Note that the Cut-elimination theorem holds for all above mentioned calculi (see e.g. \cite{Indrzejczak2021b}) and the full Leibniz' Law LL: $t_1=t_2, \varphi[x/t_1]\Rightarrow \varphi[x/t_2]$ (for arbitrary formula $\varphi$) is also provable.

	\section{The General Theory}

The S-theory of tfos is expressed by two general principles:

\medskip

EXT: $\forall x(\varphi(x)\leftrightarrow\psi(x))\rightarrow \tau x\varphi(x)=\tau x\psi(x)$

AV: $\tau x\varphi(x)=\tau y\varphi(y)$

\medskip

\n or, equivalently, by one principle:

\medskip

EXTAV: $\forall xy(x=y\rightarrow(\varphi(x)\leftrightarrow\psi(y)))\rightarrow \tau x\varphi(x)=\tau y\psi(y)$

\1

Such a general theory was first developed on the basis of positive free first-order logic  with identity by Scott \cite{scott}. However, the remaining authors used the classical first-order logic with identity as the basis. In both cases the general completeness theorem was provided and several important model theoretic results which hold for CFOLI (see in particular Da Costa \cite{DaCosta80}). In what follows, we will pay more attention to classical case since for several kinds of tfos, in particular for descriptions, it is rather difficult to find reasonable theories, in contrast to the situation in free logic (see \cite{Lambert}). 

Several possible objections can be raised against such a theory.
In a sense it is too general and too weak, on the other hand, for specific kind of operators it may be too strong, in particular in the setting of classical logic. Let us illustrate these remarks with some examples. For example, for $\imath$-operator Rosser \cite{Rosser} is enforced to add (in CFOLI) to EXT and AV the following axiom:

\1

$\exists_1x\varphi(x)\rightarrow\forall x(x=\iota x\varphi(x)\leftrightarrow\varphi(x))$

\1

\n which still gives incomplete logic as noticed by Hailperin \cite{Hailperin}.
Da Costa \cite{DaCosta80} adds:

\1

$\exists_1x\varphi(x)\rightarrow\forall x(x=\iota x\varphi(x)\rightarrow\varphi(x))$
\ \ and

$\neg\exists_1x\varphi(x)\rightarrow\iota x\varphi(x)= \iota x(x\neq x)$

\medskip

In fact, the theory of descriptions axiomatised by the addition of these two axioms to EXT and AV is redundant, since the latter principles can be proven with their help. This theory is in fact equivalent to Fregean/Carnapian theory of descriptions (often called the chosen object theory), in particular in the formulation of Kalish and Montague \cite{KaliMonMar80}. However, we call an S-theory every theory of arbitrary tfo where EXT and AV hold either as axioms or as derived theses.

On the other hand, for some theories of definite descriptions these two principles are too strong. For example, in the Russellian theory \cite{Russell1905,WhitRus} both principles do not hold. Instead we have their weaker versions:

\medskip

wEXT: $E\imath x\varphi(x) \rightarrow ((\varphi(x)\leftrightarrow\psi(x))\rightarrow \imath x\varphi(x)=\imath x\psi(x))$

\medskip

wAV: $E\imath x\varphi(x)\rightarrow\imath x\varphi(x) = \imath y\varphi(y)$.

\medskip

In other cases of tfos, like set-abstraction operator or counting operator, EXT may be even more disastrous, since for the latter it yields one half of the Fregean ill-famed V law, in fact this half which is sufficient for deriving contradiction. Similar problems with set-abstraction will be discussed below.

\subsection{The Formalisation of S-theory}

To obtain an adequate sequent calculus for S-theory we add to GCI the following two rules:

\1

\n\begin{tabular}{ll}

$(Ext)$ \ $\dfrac{\varphi(a), \Gamma
	\Rightarrow \Delta, \psi(a) \qquad
	\psi(a), \Gamma \Rightarrow \Delta, \varphi(a)}{\Gamma \Rightarrow \Delta,
	\tau x\varphi(x) = \tau x\psi(x)}$ &  $(AV)$ \ $\dfrac{\tau x\varphi(x) = \tau y\varphi(y),  \Gamma \!\!\Rightarrow
		\Delta}{\Gamma \!\!\Rightarrow
		\Delta}$ \\[16pt]

\end{tabular}

\n where $a$ is a fresh parameter.

Alternatively, we can add just one rule corresponding to EXTAV:

\begin{prooftree}
	\AxiomC{$a=b, \varphi(a), \Gamma \Rightarrow \Delta, \psi(b)$}
	\AxiomC{$a=b, \psi(b), \Gamma\Rightarrow \Delta, \varphi(a)$}
	\LeftLabel{$(ExtAV)$}
	\BinaryInfC{$\Gamma\Rightarrow\Delta, \tau x\varphi(x) = \tau y\psi(y)$}
\end{prooftree}

\n where both $a, b$ are fresh parameters.
 
\begin{theorem}
	GCI+$\{(Ext), (AV)\}$ and GCI+$\{(ExtAV)\}$ are equivalent to axiomatic formulations of S-theory of tfos.
\end{theorem}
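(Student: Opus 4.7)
The plan is to show, for each of the two calculi, that it proves exactly the same formulas as the corresponding axiomatic presentation of S-theory. Since both the calculi and the axiom systems share the classical base CFOLI and differ only in how $\tau$ is governed, it suffices to check two mutually converse claims: (a) each calculus derives the sequents $\Rightarrow$ EXT and $\Rightarrow$ AV (respectively $\Rightarrow$ EXTAV); (b) the new rules are admissible in CFOLI extended by the corresponding axiom(s), using Cut. Because GCI already contains Cut as a primitive and identity is governed by $(Ref)$ and $(2LL)$ with the full Leibniz Law provable, the bookkeeping should go through routinely.

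For (a), the derivation of $\Rightarrow \tau x\varphi(x) = \tau y\varphi(y)$ is essentially trivial: start from the axiom $\tau x\varphi(x) = \tau y\varphi(y) \Rightarrow \tau x\varphi(x) = \tau y\varphi(y)$ and apply $(AV)$ to discharge the antecedent. For EXT, I would first derive $\forall x(\varphi(x)\leftrightarrow\psi(x)), \varphi(a) \Rightarrow \psi(a)$ and its symmetric partner using $(\forall\!\!\Rightarrow)$ with witness $a$ followed by $(\leftrightarrow\!\!\Rightarrow)$ and the axiom rule; then $(Ext)$ yields $\forall x(\varphi(x)\leftrightarrow\psi(x)) \Rightarrow \tau x\varphi(x) = \tau x\psi(x)$, and $(\Rightarrow\!\!\rightarrow)$ produces EXT. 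The derivation of $\Rightarrow$ EXTAV in the one-rule calculus is analogous, with two applications of $(\forall\!\!\Rightarrow)$ and an extra $(\rightarrow\!\!\Rightarrow)$ preparing the two premises of $(ExtAV)$.

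For (b), given derivations of the two premises of $(Ext)$, since $a$ is fresh I would apply $(\Rightarrow\!\!\leftrightarrow)$ to obtain $\Gamma \Rightarrow \Delta, \varphi(a)\leftrightarrow\psi(a)$ and then $(\Rightarrow\!\!\forall)$ to get $\Gamma \Rightarrow \Delta, \forall x(\varphi(x)\leftrightarrow\psi(x))$; a Cut against the axiom EXT, together with $(\rightarrow\!\!\Rightarrow)$, delivers $\Gamma \Rightarrow \Delta, \tau x\varphi(x) = \tau x\psi(x)$. Admissibility of $(AV)$ is a single Cut against the axiom AV, and the treatment of $(ExtAV)$ is entirely parallel, with both $a$ and $b$ eigenvariables for the two successive $(\Rightarrow\!\!\forall)$ steps. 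The equivalence of the two calculi with each other then reduces to the well-known inter-derivability of EXT+AV with EXTAV in CFOLI: EXTAV specialises to AV by taking $\psi := \varphi$ (the antecedent becoming a Leibniz-Law instance), and to EXT by combining $y := x$-instantiation with AV and transitivity of identity; conversely EXT+AV imply EXTAV by first obtaining $\forall x(\varphi(x)\leftrightarrow\psi(x))$ from the EXTAV antecedent via $x = x$, then applying EXT and chaining with an instance of AV. The main technical point to watch, rather than a serious obstacle, is the handling of eigenvariable conditions and the fact that $\varphi$ and $\psi$ in the $(Ext)$/$(ExtAV)$ schemes are formulas in which further free parameters from $\Gamma, \Delta$ may occur, so the applications of $(\Rightarrow\!\!\forall)$ must be carried out on the correct fresh eigenvariables $a, b$.
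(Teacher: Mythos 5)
Your proposal is correct and follows essentially the same strategy as the paper: prove the axioms EXT, AV (resp.\ EXTAV) in the corresponding calculus, and show the rules derivable in GCI extended by the axioms via Cut, $(\Rightarrow\leftrightarrow)$, $(\Rightarrow\rightarrow)$ and $(\Rightarrow\forall)$. The only differences are cosmetic: the paper works out the $(ExtAV)$/EXTAV pair in detail and declares the $(Ext)$, $(AV)$ case ``similar and simpler,'' whereas you detail the latter; your additional sketch of the inter-derivability of EXT$+$AV with EXTAV is sound but is already taken for granted in the paper's statement of the S-theory.
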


\begin{proof}
It is sufficient to prove respective axioms in GCI+$\{(Ext), (AV)\}$ or in GCI+$\{(ExtAV)\}$ and to show that the above rules are derivable in GCI with added axioms EXT, AV or EXTAV. We will show this for the more compact version with $(ExtAV)$ and EXTAV; proofs for the remaining rules and axioms are similar and simpler. 
Provability of EXTAV:

\begin{prooftree}
	\AxiomC{$a=b \Rightarrow a=b$}
	\AxiomC{$\varphi(a)\leftrightarrow\psi(b), \varphi(a)\Rightarrow \psi(b)$}
	\LeftLabel{$(\rightarrow\Rightarrow)$}
	\BinaryInfC{$a=b\rightarrow(\varphi(a)\leftrightarrow\psi(b)), a=b, \varphi(a)\Rightarrow \psi(b)$}
	\LeftLabel{$(\forall\Rightarrow)$}
	\UnaryInfC{$\forall xy(x=y\rightarrow(\varphi(x)\leftrightarrow\psi(y))), a=b,  \varphi(a)\Rightarrow \psi(b)$}
	\AxiomC{${\cal D}$}
	\LeftLabel{$(ExtAV)$}
	\BinaryInfC{$\forall xy(x=y\rightarrow(\varphi(x)\leftrightarrow\psi(y)))\Rightarrow \tau x\varphi(x)=\tau y\psi(y)$}
\end{prooftree}

\n where the rightmost leaf is provable and ${\cal D}$ is an analogous proof of $\forall xy(x=y\rightarrow(\varphi(x)\leftrightarrow\psi(y))), a=b,  \psi(b)\Rightarrow \varphi(a)$.

Derivability of $(ExtAV)$:

\begin{prooftree}
	\AxiomC{$a=b, \varphi(a), \Gamma\Rightarrow\Delta, \psi(b)$}
	\AxiomC{$a=b, \psi(b), \Gamma\Rightarrow\Delta, \varphi(a)$}
	\LeftLabel{$(\Rightarrow\leftrightarrow)$}
	\BinaryInfC{$a=b, \Gamma\Rightarrow\Delta, \varphi(a)\leftrightarrow\psi(b)$}
	\LeftLabel{$(\Rightarrow\rightarrow)$}
	\UnaryInfC{$\Gamma\Rightarrow\Delta, a=b\rightarrow (\varphi(a)\leftrightarrow\psi(b))$}
	\LeftLabel{$(\Rightarrow\forall)$}
	\UnaryInfC{$\Gamma\Rightarrow\Delta, \forall xy(x=y\rightarrow(\varphi(x)\leftrightarrow\psi(y)))$}
	\AxiomC{${\cal D}$}
	\LeftLabel{$(Cut)$}
	\BinaryInfC{$\Gamma\Rightarrow\Delta, \tau x\varphi(x)=\tau y\psi(y)$}
\end{prooftree}

\n where both leaves are provable instances of LL and ${\cal D}$ is a proof of $\forall xy(x=y\rightarrow(\varphi(x)\leftrightarrow\psi(y)))\Rightarrow\tau x\varphi(x)=\tau y\psi(y)$ from the axiom $\Rightarrow EXTAV$. \qed \end{proof}

\1

Let us consider the question of cut elimination for either of the two formalisations of S-theory. 
We can observe that the choice of the rule $(2LL)$ for representation of LL was connected with the shape of $(Ext)$ or $(ExtAV)$. In both calculi identities can appear as the principal formulae of some rule application only in the succedent. This makes it safe for proving cut elimination since identities in antecedents can only appear either as parametric formulae or as formulae introduced by weakening. In both cases if identity is a cut formula under consideration it is eliminable either by induction on the height of cut or directly.

Still there is a problem connected with the application of $(\forall\Rightarrow)$ and $(\Rightarrow\exists)$ to complex terms. If for example $\forall x\varphi$ is a cut formula which was in both premisses of cut introduced as the principal formula, and in the right premiss $x$ was instantiated with $\tau y\psi$, then the formula $\varphi[x/\tau y\psi]$ may have higher complexity than $\forall x\varphi$ and the induction on the complexity of cut formulae fails. This problem  may be overcome either by introduction of more complex way of measuring the complexity of formulae (see e.g. \cite{Indrzejczak2019}) or by replacing the basic calculus GCI with its pure version GPCI. Of course, the restriction of all quantifier rules to parameters makes the calculus with complex terms incomplete. However, to avoid the loss of generality we can add to GPCI the rule:

\begin{prooftree}
	\AxiomC{$a = \tau x\varphi(x), \Gamma \Rightarrow \Delta$}
	\LeftLabel{$(a\Rightarrow)$}
	\UnaryInfC{$\Gamma\Rightarrow\Delta$}
\end{prooftree}

\n where $a$ is a fresh parameter.

\begin{theorem}
	The calculus GPCI+$\{(Ext), (AV)\}$ (or GPCI+$\{(ExtAV)\}$) with added $(a\Rightarrow)$ is equivalent to GCI+$\{(Ext), (AV)\}$ (or GCI+$\{(ExtAV)\}$)
\end{theorem}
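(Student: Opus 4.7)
The plan is to prove equivalence by establishing containment in both directions. The easy direction is that everything provable in the pure-plus-$(a\!\Rightarrow)$ calculus is provable in the full one: the rules of GPCI are literal instances of the GCI rules (a parameter is a term), the $\{(Ext),(AV)\}$ (or $\{(ExtAV)\}$) extras are shared, so it suffices to show that $(a\!\Rightarrow)$ is admissible in GCI. For this, note first that $\Rightarrow \tau x\varphi(x)=\tau x\varphi(x)$ is provable from the axiom $t=t\Rightarrow t=t$ by $(Ref)$. Apply $(\Rightarrow\!\exists)$ (which in GCI permits instantiation by the complex term $\tau x\varphi(x)$) to obtain $\Rightarrow\exists z(z=\tau x\varphi(x))$. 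Given a GCI proof of $a=\tau x\varphi(x),\Gamma\Rightarrow\Delta$ with $a$ fresh, apply $(\exists\!\Rightarrow)$ using $a$ as eigenvariable to obtain $\exists z(z=\tau x\varphi(x)),\Gamma\Rightarrow\Delta$, then cut against the previous sequent to get $\Gamma\Rightarrow\Delta$.

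The interesting direction is the reverse inclusion: every GCI+extras proof can be simulated in GPCI+extras+$(a\!\Rightarrow)$. I would proceed by induction on the height of the given GCI proof. All cases are immediate (identical rules) except applications of $(\forall\!\Rightarrow)$ or $(\Rightarrow\!\exists)$ whose instantiating term $t$ is a complex term $\tau y\psi(y)$; these are precisely the rules that distinguish GCI from GPCI. For $(\forall\!\Rightarrow)$, suppose we have (by the inductive hypothesis) a GPCI-proof of $\varphi[x/\tau y\psi(y)],\Gamma\Rightarrow\Delta$. Pick a parameter $a$ fresh in the whole configuration. In GPCI one proves $a=\tau y\psi(y),\forall x\varphi \Rightarrow \varphi[x/a]$ by $(\forall\!\Rightarrow)$ with parameter $a$, and then, by the (cut-free) derivability of full Leibniz' Law recalled at the end of Section 2, one has $a=\tau y\psi(y),\varphi[x/a]\Rightarrow \varphi[x/\tau y\psi(y)]$. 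Two cuts then yield $a=\tau y\psi(y),\forall x\varphi,\Gamma\Rightarrow\Delta$, and $(a\!\Rightarrow)$ discharges the equation (since $a$ is fresh), giving the desired $\forall x\varphi,\Gamma\Rightarrow\Delta$. The case of $(\Rightarrow\!\exists)$ is symmetric, using LL on the succedent side.

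The main obstacle is purely bookkeeping: one must ensure that the parameter $a$ introduced in each simulated step is fresh with respect to the \emph{whole} sequent context $\Gamma,\Delta,\forall x\varphi$ (or $\exists x\varphi$) and with respect to the term $\tau y\psi(y)$ itself, so that the eigenvariable side condition of $(a\!\Rightarrow)$ is met. This is unproblematic because parameters can always be renamed in a GPCI proof; and since the simulation is performed locally at each offending inference during the induction, the fresh choices can be made independently. Once this is dispatched, the translation composes through the proof tree and yields the equivalence for both variants $\{(Ext),(AV)\}$ and $\{(ExtAV)\}$, the choice of extras playing no role in the argument. \qed
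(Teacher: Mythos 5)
Your proposal is correct and follows essentially the same route as the paper: one direction derives $(a\Rightarrow)$ in GCI via reflexivity, $(\Rightarrow\!\exists)$ on the complex term, $(\exists\!\Rightarrow)$ and cut, and the other direction simulates the unrestricted $(\forall\!\Rightarrow)$/$(\Rightarrow\!\exists)$ in GPCI+$(a\Rightarrow)$ by cutting with an instance of full Leibniz' Law and then discharging the fresh equation with $(a\Rightarrow)$. The only cosmetic difference is that you package the translation as an induction on proof height and work out the $(\forall\!\Rightarrow)$ case where the paper displays $(\Rightarrow\!\exists)$, which changes nothing of substance.
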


\begin{proof}
It is enough to show that $(a\Rightarrow)$ is derivable in GCI:

\begin{prooftree}
	\AxiomC{$\Rightarrow \tau x\varphi(x) = \tau x\varphi(x)$}
	\LeftLabel{$(\Rightarrow\exists)$}
	\UnaryInfC{$\Rightarrow \exists y(y = \tau x\varphi(x))$}
	\AxiomC{$a = \tau x\varphi(x), \Gamma \Rightarrow \Delta$}
	\RightLabel{$(\exists\Rightarrow)$}
	\UnaryInfC{$\exists y(y=\tau x\varphi(x)), \Gamma\Rightarrow\Delta$}
	\LeftLabel{$(Cut)$}
	\BinaryInfC{$\Gamma\Rightarrow\Delta$}
\end{prooftree}

\n and that unrestricted $(\forall\Rightarrow), (\Rightarrow\exists)$ are derivable in GPC with $(a\Rightarrow)$:

\begin{prooftree}
	\AxiomC{$\Gamma\Rightarrow\Delta, \varphi(\tau x\psi(x))$}
	\AxiomC{$\varphi(\tau x\psi(x)), a=\tau x\psi(x) \Rightarrow \varphi(a)$}
	\LeftLabel{$(Cut)$}
	\BinaryInfC{$a = \tau x\psi(x), \Gamma\Rightarrow\Delta, \varphi(a)$}
	\LeftLabel{$(\Rightarrow\exists)$}
	\UnaryInfC{$a = \tau x\psi(x), \Gamma\Rightarrow \Delta, \exists x\varphi$}
	\LeftLabel{$(a\Rightarrow)$}
	\UnaryInfC{$\Gamma\Rightarrow\Delta, \exists x\varphi$}
\end{prooftree}

\n where the rightmost sequent being an instance of LL is provable.
Similar proof works for $(\forall\Rightarrow)$. \qed \end{proof}

\1

Let us call GPCI+$\{(Ext), (AV)\}$ (or GPCI+$\{(ExtAV)\}$) with added $(a\Rightarrow)$ simply GS (GS'). Note that for both systems the following lemma holds:

\begin{lemma}\label{leibnizandsubstitution}
	\begin{enumerate}
		\item
		$\vdash t_1=t_2, \varphi[x/t_1] \Rightarrow\varphi[x/t_2]$, for any formula $\varphi$.
		\item
		If $\vdash_k \Gamma \Rightarrow \Delta$, then $\vdash_k \Gamma[b_1/b_2] \Rightarrow \Delta[b_1/b_2]$, where $k$ is the height of a proof.  
	\end{enumerate}	
\end{lemma}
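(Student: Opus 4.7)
My plan is to establish the two clauses in the order of their logical dependency: clause 2 first as a technical tool, since it is nothing but a height-preserving parameter renaming lemma, and then clause 1 by induction on formula complexity with clause 2 invoked tacitly to justify the freshness of eigenvariables throughout. This mirrors the standard Gentzen-style organisation under which the split of individual variables into $VAR$ and $PAR$ was introduced in the first place.

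For clause 2 I proceed by induction on the proof height $k$. The base case $k=1$ is an instance of $(AX)$, which is obviously preserved by the uniform renaming $b_1\mapsto b_2$. In the step case I split on the last rule $R$. For rules without an eigenvariable condition (all propositional rules, the structural rules, $(Cut)$, $(\forall\!\!\Rightarrow)$, $(\Rightarrow\!\!\exists)$, $(Ref)$, $(2LL)$, and $(AV)$), I simply apply the IH to each premise and then reapply $R$ to the renamed premises; this is sound because parameter substitution commutes with the term and formula substitutions that figure in the rules. The delicate rules are $(\Rightarrow\!\!\forall)$, $(\exists\!\!\Rightarrow)$, $(Ext)$, $(ExtAV)$, and — specifically for GS and GS' — $(a\!\!\Rightarrow)$, each of which requires a fresh eigenvariable $a$. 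If the chosen $a$ happens to coincide with $b_1$ or $b_2$, I first rename $a$ throughout the subproof to some genuinely fresh $a'$; this preliminary renaming is itself an invocation of the lemma at strictly smaller height and in the trivial sub-case where the target is already fresh, so no circularity arises.

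For clause 1 I argue by induction on the complexity of $\varphi$. The atomic base case follows at once by a single application of $(2LL)$ with $\Gamma=\{t_1=t_2,\varphi[x/t_1]\}$ and $\Delta=\emptyset$, starting from the two axioms $t_1=t_2,\varphi[x/t_1]\Rightarrow t_1=t_2$ and $t_1=t_2,\varphi[x/t_1]\Rightarrow\varphi[x/t_1]$. The propositional induction steps are routine applications of the corresponding connective rules to the sequents furnished by the IH; the cases where one needs $t_2=t_1$ rather than $t_1=t_2$ (such as $\neg$ and the right-to-left direction of $\leftrightarrow$) are dispatched by deriving the symmetry of identity in the same way from $(2LL)$. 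For the quantifier case $\varphi=\forall y\psi$ with $y\neq x$, I close the goal from below by $(\Rightarrow\!\!\forall)$ on the succedent with a fresh parameter $a$ and then by $(\forall\!\!\Rightarrow)$ on the antecedent with the same $a$, reducing the goal to $t_1=t_2,\psi[y/a][x/t_1]\Rightarrow\psi[y/a][x/t_2]$, which is available by the IH since $\psi[y/a]$ has strictly smaller complexity than $\forall y\psi$. Clause 2 is used silently here to guarantee that $a$ can always be chosen genuinely fresh without altering proof heights.

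The main obstacle is the eigenvariable bookkeeping in clause 2, and in particular the handling of the GS/GS'-specific rule $(a\!\!\Rightarrow)$, whose fresh parameter is introduced on the antecedent and may be used arbitrarily throughout its subproof. The inside-out renaming inside the very induction being carried out absorbs this difficulty, provided the induction is organised so that the sub-case "rename to a genuinely fresh parameter" is the privileged, trivially height-preserving one to which all other sub-cases reduce.
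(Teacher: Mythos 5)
Your proposal is correct and follows essentially the same route as the paper, which proves clause 1 by induction on the complexity of $\varphi$ (with the atomic case settled by $(2LL)$) and clause 2 by induction on the height of proofs. The extra care you take with eigenvariable renaming and the derivation of symmetry of identity is exactly the ``standard'' bookkeeping the paper leaves implicit.
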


\noindent \begin{proof} 1. follows by induction on the complexity of $\varphi$
	and is standard for all cases.
	The proof of 2 is by induction on the height of proofs. \qed \end{proof}

\medskip

The first result is Leibniz' Law (LL) stated in full generality, i.e. covering also complex terms. Since $(2LL)$ yields only LL restricted to atomic formulae, we need its unrestricted form for completeness. The second result is a substitution lemma which is necessary for unifying terms while proving the cut elimination theorem. Note that it is restricted to parameters only but in the case of GS (GS'), which is an extension of GPCI, it is sufficient since only parameters are instantiated for bound variables in all applications of quantifier rules.

\begin{theorem} 
	The cut elimination theorem holds for GS and GS'.
\end{theorem}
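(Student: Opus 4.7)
The plan is to prove cut elimination by the standard double induction on the lexicographically ordered pair $(c,h)$, where $c$ is the complexity of the cut formula and $h$ is the cut-height (the sum of the heights of the derivations of the two premises of the uppermost cut). The easy subcases are disposed of as usual: if one premise of the cut is an axiom or the cut formula is introduced by weakening, the cut disappears directly; if the cut formula is parametric in at least one premise, the cut permutes upward through the last rule on that side, strictly decreasing $h$ while leaving $c$ fixed, and the inductive hypothesis closes the resulting smaller cut(s). Whenever such a permutation would violate the freshness condition of a rule with an eigenvariable --- namely $(\Rightarrow\forall)$, $(\exists\Rightarrow)$, $(Ext)$, $(ExtAV)$ and $(a\Rightarrow)$ --- I first rename the offending eigenvariable via part 2 of Lemma~\ref{leibnizandsubstitution}, which is height-preserving and so leaves the inductive measure intact.

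When the cut formula is principal in both premises and compound, the reduction is entirely standard, replacing the cut by cuts on immediate subformulas of strictly lower complexity. The decisive reason this closes in the quantifier cases is exactly the reason the systems are built on GPCI extended by $(a\Rightarrow)$, as flagged in the paragraph preceding the theorem: since $(\forall\Rightarrow)$ and $(\Rightarrow\exists)$ instantiate only with parameters, the reduced cut is on $\varphi[x/b]$, whose complexity equals that of $\varphi$ and is therefore strictly less than that of $Qx\varphi$, so the induction on $c$ is legitimate. The rule $(a\Rightarrow)$ itself never produces a new formula in its conclusion and can therefore only occur in the parametric case already handled.

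The only case requiring the specific structure of GS and GS' concerns a cut on an identity $t_1 = t_2$. Here the observation made just before the theorem is decisive: no rule of either calculus has an identity as principal formula in the antecedent --- $(Ref)$, $(AV)$, $(a\Rightarrow)$ and the left premise of $(2LL)$ only consume identities, and identities appear principally in the succedent only via $(Ext)$, $(ExtAV)$, $(2LL)$ or $(\Rightarrow W)$. Consequently an identity cut formula is never principal in the right premise; it is always either parametric, introduced by weakening, or a side formula of a rule whose premise still contains it, and in every such subcase cut permutes upward on the right and $h$ strictly decreases. The subcases where the identity is principal in the left premise via $(Ext)$, $(ExtAV)$, $(2LL)$ or $(\Rightarrow W)$ are therefore disposed of by the very same permutation on the right and the inductive hypothesis.

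I expect the main obstacle to be the bookkeeping for the rules with freshness conditions, especially $(Ext)$, $(ExtAV)$ and $(a\Rightarrow)$: when one pushes a cut above such a rule, one must be sure that the parameters occurring free in the other subderivation do not clash with the rule's eigenvariable. Part 2 of Lemma~\ref{leibnizandsubstitution} is precisely the tool for this, allowing a height-preserving renaming of the problematic parameter in the relevant subderivation before the permutation is performed. Once this verification is carried out uniformly for each of $(\Rightarrow\forall)$, $(\exists\Rightarrow)$, $(Ext)$, $(ExtAV)$ and $(a\Rightarrow)$, the standard double induction goes through without further obstacles.
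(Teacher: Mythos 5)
Your proposal is correct and follows essentially the same route as the paper: a standard double induction on cut-formula complexity and height, with the key observation that identities (and, more generally, the atomic principal formulae of $(2LL)$, $(Ext)$, $(ExtAV)$) occur principally only in the succedent, so such cut formulae are never principal in the right premiss and the cut permutes upward, while $(AV)$ and $(a\Rightarrow)$ have no principal formula in the conclusion and so cause no trouble. The only slight difference is that you phrase the atomic case solely in terms of identities, whereas $(2LL)$ may introduce an arbitrary atom as principal formula in the succedent --- but your argument applies verbatim to that case as well.
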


\noindent \begin{proof}
The proof is standard and essentially requires two inductions: on the complexity of cut formula and on the height of the derivations of both premisses of cut. In general we can follow the strategy applied for example in \cite{Indrzejczak2021b}; here we focus only on the crucial points connected with the new rules which could lead to troubles. 

Consider the situation where the cut formula in the left premiss is the principal formula of the application of $(2LL)$.  It is an atomic formula, possibly an identity. Since in no logical rule atomic formula in the antecedent can be a principal formula, so in the right premiss a suitable cut formula is either introduced by weakening or is just a parametric formula. In the first case it is directly eliminated, in the second it is eliminated by induction on the height of the proof. The case where the right premiss is axiomatic is also directly eliminable.

The cases where in the left premiss the cut formula is the principal formula of the application of $(Ext)$ or $(ExtAV)$ are treated in a similar way. Eventually, rules like $(AV)$ or $(a\Rightarrow)$ have no impact on the elimination of cuts since there are no principal formulae in the conclusion.
\qed \end{proof}

Although we cannot totally avoid the loss of the subformula property in GS and GS', the introduction of complex terms is separated from quantifier rules and technically it is more desirable. In fact, from the semantic point of view we are not really in need of introducing an arbitrary complex term in the premiss while doing a proof-search. The rule is required only for these terms which either occur already in $\Gamma, \Delta$, or have in their scope the formulae from $\Gamma, \Delta$. It can be shown by providing Hintikka-style completeness proof for this system which is possible since Henkin-style proofs were provided by the mentioned authors; we omit the details because of space restrictions. 

In fact, for the needs of proof-search we could simplify GS (GS') a little bit. In particular we could use a more convenient one-premiss rule of Negri and von Plato \cite{neg:str01} for LL of the form:

\begin{prooftree}
	
	\AxiomC{$\varphi(t_2), \Gamma\Rightarrow \Delta$}
	\LeftLabel{$(1LL)$}
	\UnaryInfC{$t_1=t_2, \varphi(t_1), \Gamma\Rightarrow\Delta$}
\end{prooftree}

\n for all cases where at least one of $t_1, t_2$ is a parameter and $\varphi(t_1)$ is not an identity with both arguments being complex terms. In fact, the only troublesome cases of LL which could make a clash in the proof of cut elimination are three:

\begin{enumerate}
	\item $b=t, t=t' \Rightarrow b=t'$
	\item $t=t', \varphi(t) \Rightarrow \varphi(t')$
	\item $t=t', t'=t'' \Rightarrow t=t''$
\end{enumerate}

\n where $t, t'$ are complex terms, and only for these cases a two-premiss rule $(2LL)$ is necessary.

Also note that instead of $(Ref)$ we can use more restricted version:

\begin{prooftree}
	
	\AxiomC{$b=b, \Gamma\Rightarrow \Delta$}
	\LeftLabel{$(Ref')$}
	\UnaryInfC{$\Gamma\Rightarrow\Delta$}
\end{prooftree}

\n since $\tau x\varphi(x)=\tau x\varphi(x)$ is derivable by $(Ext)$ or $(ExtAV)$.

\subsection{The Formalisation of T-theory}

The theory of abstraction-operators developed by Tennant, which we call here a T-theory of tfos, is generally much stronger than S-theory. But we must emphasize that it is formulated in the setting of much weaker logic, namely NFFOLI (negative free FOLI), where not only quantifier rules are weaker but also the identity is not (unconditionally) reflexive. 

Tennant's theory of tfo is based on the following natural deduction rules:

\1

$(\tau I)$ \ If $\varphi(a), Ea\vdash aRt$ and $aRt\vdash\varphi(a)$ and $Et$, then $t=\tau x\varphi(x)$; 

$(\tau E1)$ If $t=\tau x\varphi(x)$ and $\varphi(b)$ and $Eb$, then $bRt$

$(\tau E2)$ If $t=\tau x\varphi(x)$, then $Et$

$(\tau E3)$ If $t=\tau x\varphi(x)$ and $bRt$, then $\varphi(b)$

\1

\n where $a$ is an eigenvariable, and $R$ is a specific relation involved in the characterisation of $\tau$. For example, $R$ is $=$ for the case of $\imath$, and $\in$ for set-abstraction operator. 
The corresponding sequent rules are:

\begin{prooftree}
	\AxiomC{$\Gamma \Rightarrow \Delta, Et$}
	\AxiomC{$Ea, \varphi(a), \Gamma \Rightarrow \Delta, aRt$}
	\AxiomC{$aRt, \Gamma \Rightarrow \Delta, \varphi(a)$}
	\LeftLabel{$(\Rightarrow\tau)$}
	\TrinaryInfC{$\Gamma \Rightarrow \Delta, t=\tau x\varphi(x)$}
\end{prooftree}

\n where $a$ is not in $\Gamma, \Delta, \varphi$

\begin{prooftree}
	\AxiomC{$\Gamma\Rightarrow \Delta, Eb$}
	\AxiomC{$\Gamma\Rightarrow \Delta, \varphi(b)$}
	\AxiomC{$\Gamma\Rightarrow \Delta, t=\tau x\varphi(x)$}
	\LeftLabel{$(\Rightarrow\tau E1)$}
	\TrinaryInfC{$\Gamma \Rightarrow \Delta, bRt $}
\end{prooftree}

\begin{prooftree}
	\AxiomC{$\Gamma \Rightarrow \Delta, t=\tau x\varphi(x)$}
	\LeftLabel{$(\Rightarrow\tau E2)$}
	\UnaryInfC{$\Gamma \Rightarrow \Delta, Et $}
\end{prooftree}

\begin{prooftree}
	\AxiomC{$\Gamma \Rightarrow \Delta, bRt$}
	\AxiomC{$\Gamma \Rightarrow \Delta, t=\tau x\varphi(x)$}
	\LeftLabel{$(\Rightarrow\tau E3)$}
	\BinaryInfC{$\Gamma \Rightarrow \Delta, \varphi(b)$}
\end{prooftree}

To get more standard SC we can apply the rule-generation theorem (see e.g. \cite{ind:sat})
and obtain left introduction rules for $\tau$:

\begin{prooftree}
	\AxiomC{$\Gamma \Rightarrow \Delta, Eb$}
	\AxiomC{$\Gamma \Rightarrow \Delta, \varphi(b)$}
	\AxiomC{$bRt, \Gamma \Rightarrow \Delta$}
	\LeftLabel{$(\tau \Rightarrow 1)$}
	\TrinaryInfC{$t=\tau x\varphi(x), \Gamma \Rightarrow \Delta $}
\end{prooftree}

\begin{prooftree}
	\AxiomC{$ Et, \Gamma \Rightarrow \Delta$}
	\LeftLabel{$(\tau \Rightarrow 2)$}
	\UnaryInfC{$t=\tau x\varphi(x), \Gamma \Rightarrow \Delta $}
\end{prooftree}

\begin{prooftree}
	\AxiomC{$\Gamma \Rightarrow \Delta, bRt$}
	\AxiomC{$\varphi(b), \Gamma \Rightarrow \Delta$}
	\LeftLabel{$(\tau\Rightarrow 3)$}
	\BinaryInfC{$t=\tau x\varphi(x), \Gamma \Rightarrow \Delta$}
\end{prooftree}

Note that if we transfer these rules to the setting of CFOLI we do not need formulae of the form $Et$, and the rule $(\tau\Rightarrow 2)$, being specific to negative free logic, is superfluous.
As a result we obtain the following three rules:

\begin{prooftree}
	\AxiomC{$\varphi(a), \Gamma \Rightarrow \Delta, aRt$}
	\AxiomC{$aRt, \Gamma \Rightarrow \Delta, \varphi(a)$}
	\LeftLabel{$(\Rightarrow\tau)$}
	\BinaryInfC{$\Gamma \Rightarrow \Delta, t=\tau x\varphi(x)$}
\end{prooftree}

\n where $a$ is not in $\Gamma, \Delta, \varphi$

\begin{prooftree}
	\AxiomC{$\Gamma \Rightarrow \Delta, \varphi(b)$}
	\AxiomC{$bRt, \Gamma \Rightarrow \Delta$}
	\LeftLabel{$(\tau \Rightarrow)$}
	\BinaryInfC{$t=\tau x\varphi(x), \Gamma \Rightarrow \Delta $}
\end{prooftree}

\begin{prooftree}
	\AxiomC{$\Gamma \Rightarrow \Delta, bRt$}
	\AxiomC{$\varphi(b), \Gamma \Rightarrow \Delta$}
	\LeftLabel{$(\tau\Rightarrow)$}
	\BinaryInfC{$t=\tau x\varphi(x), \Gamma \Rightarrow \Delta$}
\end{prooftree}

In general what we obtain with these rules is equivalent to the following principle, often called Lambert axiom:

\1

LA: \ $\forall y(y = \tau x\varphi(x) \leftrightarrow \forall x(\varphi(x)\leftrightarrow xRy))$

\1

\n which is derivable also in the setting of NFFOLI. In the setting of CFOLI it is equivalent to Hintikka axiom: 

\1

HA: \ $t = \tau x\varphi(x) \leftrightarrow \forall x(\varphi(x)\leftrightarrow xRt)$

\1

\n for which we demonstrate syntactically the equivalence with the stated rules. 
In one direction we have:

\begin{scriptsize}
	\begin{prooftree}
		\AxiomC{$\varphi[x/a] \Rightarrow \varphi[x/a]$}
		\AxiomC{$aRt \Rightarrow aRt$}
		\LeftLabel{$(\tau\Rightarrow)$}
		\BinaryInfC{$t=\tau x\varphi(x), \varphi[x/a] \Rightarrow aRt $}
		\AxiomC{$aRt\Rightarrow aRt$}
		\AxiomC{$ \varphi[x/a] \Rightarrow \varphi[x/a]$}
		\BinaryInfC{$t=\tau x\varphi(x), aRt \Rightarrow \varphi[x/a] $}
		\LeftLabel{$(\Rightarrow\leftrightarrow)$}
		\BinaryInfC{$t=\tau x\varphi(x) \Rightarrow \varphi[x/a]\leftrightarrow aRt $}
		\LeftLabel{$(\Rightarrow\forall)$}
		\UnaryInfC{$t=\tau x\varphi(x) \Rightarrow \forall x(\varphi(x)\leftrightarrow xRt) $}
	\end{prooftree}
\end{scriptsize}

In the second direction: 

\begin{scriptsize}
	\begin{prooftree}
		\AxiomC{$aRt \Rightarrow aRt$}
		\AxiomC{$\varphi[x/a]\Rightarrow \varphi[x/a]$}
		\LeftLabel{$(\leftrightarrow\Rightarrow)$}
		\BinaryInfC{$\varphi[x/a]\leftrightarrow aRt, aRt \Rightarrow\varphi[x/a] $}
		\LeftLabel{$(\forall\Rightarrow)$}
		\UnaryInfC{$\forall x(\varphi(x)\leftrightarrow xRt), aRt \Rightarrow\varphi[x/a] $}
		\AxiomC{$\varphi[x/a] \Rightarrow \varphi[x/a]$}
		\AxiomC{$aRt\Rightarrow aRt$}
		\BinaryInfC{$\varphi[x/a]\leftrightarrow aRt, \varphi[x/a]\Rightarrow aRt $}
		\UnaryInfC{$\forall x(\varphi(x)\leftrightarrow xRt), \varphi[x/a]\Rightarrow aRt $}
		\LeftLabel{$(\Rightarrow\tau)$}
		\BinaryInfC{$\forall x(\varphi(x)\leftrightarrow xRt)\Rightarrow t=\tau x\varphi(x) $}
	\end{prooftree}
\end{scriptsize}

Derivability of the specific rules is straightforward. Notice that from HA as additional axioms we obtain:

\1

(a) $t=\tau x\varphi(x)\Rightarrow \forall x(\varphi(x)\leftrightarrow xRt)$  \ \ \ \ and

(b) $\forall x(\varphi(x)\leftrightarrow xRt)\Rightarrow t=\tau x\varphi(x) $.

\1

From the premisses of any variant of $(\tau\Rightarrow)$, applying weakening we deduce:

\begin{prooftree}
	\AxiomC{$\Gamma \Rightarrow \Delta, bRt, \varphi[x/b]$}
	\AxiomC{$bRt, \varphi[x/b], \Gamma \Rightarrow \Delta$}
	\LeftLabel{$(\leftrightarrow\Rightarrow)$}
	\BinaryInfC{$\varphi[x/b]\leftrightarrow bRt, \Gamma \Rightarrow\Delta $}
	\LeftLabel{$(\forall\Rightarrow)$}
	\UnaryInfC{$\forall x(\varphi(x)\leftrightarrow xRt), \Gamma \Rightarrow\Delta $}
\end{prooftree}

\n which, by cut with (a) yields the conclusion of $(\tau\Rightarrow)$. In a similar way we deduce $\Gamma\Rightarrow \Delta, \forall x(\varphi(x)\leftrightarrow xRt)$ from premisses of $(\Rightarrow\tau)$, and by cut with (b) we obtain the conclusion of this rule.

One should note that T-theory is much stronger than S-theory; both central principles EXT and AV are provable (in fact even in the setting of NFFOLI by means of the weaker rules). 

\begin{scriptsize}
	\begin{prooftree}
		\AxiomC{$aR\tau x\varphi(x) \Rightarrow aR\tau x\varphi(x)$}
		\AxiomC{$\varphi[x/a], \varphi[x/a]\leftrightarrow \psi[x/a] \Rightarrow \psi[x/a]$}
		\LeftLabel{$(\tau\Rightarrow)$}
		\BinaryInfC{$\tau x\varphi(x)=\tau x\varphi(x), \varphi[x/a]\leftrightarrow \psi[x/a], aR\tau x\varphi(x) \Rightarrow\psi[x/a] $}
		\LeftLabel{$(Ref)$}
		\UnaryInfC{$\varphi[x/a]\leftrightarrow \psi[x/a], aR\tau x\varphi(x) \Rightarrow\psi[x/a] $}
		\LeftLabel{$(\forall\Rightarrow)$}
		\UnaryInfC{$\forall x(\varphi(x)\leftrightarrow \psi(x)), aR\tau x\varphi(x) \Rightarrow\psi[x/a] $}
		\AxiomC{${\cal D}$}
		\LeftLabel{$(\Rightarrow\tau)$}
		\BinaryInfC{$\forall x(\varphi(x)\leftrightarrow \psi(x))\Rightarrow \tau x\varphi(x)=\tau x\psi(x) $}
	\end{prooftree}
\end{scriptsize}

\n where the second leaf is directly provable and ${\cal D}$ is an analogous proof of $\forall x(\varphi(x)\leftrightarrow \psi(x)), \psi[x/a]\Rightarrow aR\tau x\varphi(x) $.

\begin{scriptsize}
	\begin{prooftree}
		\AxiomC{$aR\tau x\varphi(x) \Rightarrow aR\tau x\varphi(x)$}
		\AxiomC{$\varphi[x/a] \Rightarrow \varphi[y/a]$}
		\LeftLabel{$(\tau\Rightarrow)$}
		\BinaryInfC{$\tau x\varphi(x)=\tau x\varphi(x),  aR\tau x\varphi(x) \Rightarrow\varphi[y/a] $}
		\LeftLabel{$(Ref)$}
		\UnaryInfC{$aR\tau x\varphi(x) \Rightarrow\varphi[y/a] $}
		\AxiomC{$\varphi[y/a] \Rightarrow \varphi[x/a]$}
		\AxiomC{$aR\tau x\varphi(x) \Rightarrow aR\tau x\varphi(x)$}
		\BinaryInfC{$\tau x\varphi(x)=\tau x\varphi(x),  \varphi[y/a] \Rightarrow aR\tau x\varphi(x) $}
		\UnaryInfC{$\varphi[y/a]\Rightarrow aR\tau x\varphi(x) $}
		\LeftLabel{$(\Rightarrow\tau)$}
		\BinaryInfC{$\Rightarrow \tau x\varphi(x)=\tau y\varphi(y) $}
	\end{prooftree}
\end{scriptsize}

Note that $\varphi[x/a]$ and $\varphi[y/a]$ are identical since $\varphi(x)$ and $\varphi(y)$ are alphabetic variants.

One may even prove the converse of EXT:

\begin{scriptsize}
	\begin{prooftree}
		\AxiomC{$\varphi[x/a] \Rightarrow \varphi[x/a]$}
		\AxiomC{$aR\tau x\varphi(x) \Rightarrow aR\tau x\varphi(x)$}
		\LeftLabel{$(\tau\Rightarrow)$}
		\BinaryInfC{$\tau x\varphi(x)=\tau x\varphi(x),  \varphi[x/a] \Rightarrow aR\tau x\varphi(x) $}
		\LeftLabel{$(Ref)$}
		\UnaryInfC{$\varphi[x/a] \Rightarrow aR\tau x\varphi(x)$}
		\AxiomC{$\psi[x/a] \Rightarrow \psi[x/a]$}
		\LeftLabel{$(\tau\Rightarrow)$}
		\BinaryInfC{$\tau x\varphi(x)=\tau x\psi(x), \varphi[x/a] \Rightarrow \psi[x/a]$}
		\AxiomC{${\cal D}$}
		\BinaryInfC{$\tau x\varphi(x)=\tau x\psi(x)  \Rightarrow \varphi[x/a]\leftrightarrow \psi[x/a] $}
		\LeftLabel{$(\Rightarrow\forall)$}
		\UnaryInfC{$\tau x\varphi(x)=\tau x\psi(x) \Rightarrow \forall x(\varphi(x)\leftrightarrow \psi(x)) $}
	\end{prooftree}
\end{scriptsize}

\n where ${\cal D}$ is a similar proof of $\tau x\varphi(x)=\tau x\psi(x), \psi[x/a]  \Rightarrow \varphi[x/a] $.

To realise how strong is this principle on the ground of CFOLI notice that when $t$ is instantiated with $\tau x\varphi(x)$ we obtain:

\1

$\tau x\varphi(x) = \tau x\varphi(x) \leftrightarrow \forall x(\varphi(x)\leftrightarrow xR\tau x\varphi(x))$.

\1

which by (unrestricted) reflexivity of = yields:

\1

$\forall x(\varphi(x)\leftrightarrow xR\tau x\varphi(x))$.

\1

For several term-forming operators, at least on the ground of CFOLI, it is too strong. For example if we instantiate this principle with iota-operator (where $R$ is = ) we run into contradiction:

\1

\begin{footnotesize}
	
1. $\imath x(Ax\wedge\neg Ax) = \imath x(Ax\wedge\neg Ax) \rightarrow \forall x(Ax\wedge\neg Ax\leftrightarrow x = \imath x(Ax\wedge\neg Ax))$

2. $\imath x(Ax\wedge\neg Ax) = \imath x(Ax\wedge\neg Ax)$

3. $\forall x(Ax\wedge\neg Ax\leftrightarrow x = \imath x(Ax\wedge\neg Ax))$ 1, 2

4. $A(\imath x(Ax\wedge\neg Ax))\wedge\neg A(\imath x(Ax\wedge\neg Ax))\leftrightarrow \imath x(Ax\wedge\neg Ax) = \imath x(Ax\wedge\neg Ax))$ 3

5. $A(\imath x(Ax\wedge\neg Ax))\wedge\neg A(\imath x(Ax\wedge\neg Ax))$ 4, 2
\end{footnotesize}

\1

Similarly in the case of set-abstraction operator (where $R$ is $\in$) we obtain
just unrestricted axiom of comprehension which immediately leads to Russell's paradox. Hence it is crucial to establish what is $R$ for the specific tfo to decide if Tennant's rules may be safely added to GCI or GPCI. 
Therefore, we do not attempt here to state T-theory as a general calculus GT. Instead we will consider in the next section the application of his theory to set-abstraction operator, since even in this context one may introduce restrictions which can prevent us against troubles.

\section{Application to Set-abstracts}

Several kinds of set theory with set-abstraction operator as primitive can be rather easily developed on the basis of S- or T-theory as formalised in the preceding section. In fact, both Scott \cite{scott} and Tennant \cite{Tennant1978} applied their theories to set-abstract operators but in the context of free logic the unrestricted axiom of comprehension does not lead to Russell's paradox. However we work here in the setting of CFOL so the rules responsible for its derivation must be somehow restricted. For these reasons we decided to examine the possible formalisations of Quine's NF (New Foundations) as developed in \cite{Rosser}, where the comprehension axiom is suitably restricted by means of the outer syntactic side condition which is independent of the structure of rules. In fact, NF is not very popular formalisation of set theory due to some peculiarities. However, it has also several advantages which we are not going to discuss here because of the space restrictions\footnote{See in particular its presentation in \cite{Rosser} and discussion in \cite{Hatcher1982} and \cite{Holmes}.}. In particular,  the syntactic simplicity of NF make it a very convenient theory for proof-theoretic investigations.

Before we focus on sequent calculi for NF let us start with some general preliminaries concerning arbitrary formalisation of set theory. It often goes unnoticed that it may be developed in the language where only $\in$ is a primitive predicate or in the language with = primitive, which is rather more popular choice. In the latter case we assume that we have already  some axioms/rules for = , so the only specific axiom we need for sets is:

\1

$ExtAx:$ $\forall xy(\forall z(z\in x\leftrightarrow z\in y)\rightarrow x=y)$

\1

\n since the converse is already provable by LL.

\1

If we start with CFOL (only $\in$ primitive), = may be defined either in the Leibnizian spirit:

\1

$=^L$: \ $ t=t' := \forall z(t\in z\leftrightarrow t'\in z)$

\1

\n or in the way Quine prefers:

\1

$=^Q$: \ $ t=t' := \forall z(z\in t\leftrightarrow z\in t')$

\1

The first choice leads to the standard characterisation of = and the axiom $ExtAx$ is still required. The second one is different since $ExtAx$ is provable but still we cannot obtain the full characterisation of identity. Therefore  we must add a special form of LL as an extensionality axiom:

\1

$ExtAx'$: \ $\forall xyz(x=y \rightarrow (x\in z\rightarrow y\in z))$ 

\1

\n and this is the way Quine proceeded with the development of NF. The second axiom is the axiom of abstraction:

\1

$ABS$: \ $ \forall x(x\in \{y:\varphi(y)\} \leftrightarrow \varphi[y/x])$

\1

\n where $\varphi$ is stratified. Assuming that the only predicate is $\in$ this condition may be defined roughly as follows:  it is possible to define a mapping from variables of $\varphi$ into integers in a way that for each atom we have $i\in i+1$. In case we admit =, a mapping should yield $i=i$. In what follows we will admit both kinds of formulae as atomic, briefly called $\in$-atoms and =-atoms.

We will consider two approaches to construction of cut-free sequent calculus for NF. Although the rules $(Ext), (AV)$ will be not primitive but derivable in both, the first one, following closely Quinean formulation, is closer to the general GS, whereas the second starts with Tennant's rules suitably restricted.

\subsection{The S-approach to NF}

There is no sense to take the instances of $(Ext)$ and $(AV)$ as primitive rules since it will not save us from addition of most of the specific rules for set-abstraction operators and =. So it is better to follow quite closely the original Quinean axiomatisation of NF. A difference with the latter is connected with the treatment of identity, since we take it as a primitive predicate characterised by rules. However, we do not take the primitive rules of GPCI for identity as primitive but rather provide new rules based on $=^Q$. Hence we take GPC as the basis and add:

\begin{prooftree}
	\AxiomC{$a\in t, \Gamma \Rightarrow \Delta, a\in t'$}
	\AxiomC{$a\in t', \Gamma\Rightarrow \Delta, a\in t$}
	\LeftLabel{$(\Rightarrow =)$}
	\BinaryInfC{$\Gamma\Rightarrow\Delta, t = t'$}
\end{prooftree}

\begin{prooftree}
	\AxiomC{$\Gamma \Rightarrow \Delta, b\in t, b\in t'$}
	\AxiomC{$b\in t, b\in t', \Gamma\Rightarrow \Delta$}
	\LeftLabel{$(=\Rightarrow)$}
	\BinaryInfC{$t=t', \Gamma\Rightarrow\Delta$}
\end{prooftree}

\n These rules correspond to $=^Q$.
Moreover, we add two rules corresponding to the axiom ABS:

	\1

\begin{tabular}{ll}
	
	$(Abs\Rightarrow)$ \ $\dfrac{\varphi[x/t], \Gamma \!\!\Rightarrow \Delta
	}{t\in\{x:\varphi(x)\}, \Gamma \!\!\Rightarrow
		\Delta }$  & \ \ 
	$(\Rightarrow Abs)$ \ $\dfrac{\Gamma \!\!\Rightarrow
		\Delta, \varphi[x/t]}{\Gamma \!\!\Rightarrow \Delta, t\in\{x:\varphi(x)\}}$\\[16pt]

\end{tabular}

\n with $\varphi$ stratified. 

We omit easy proofs of the equivalence of stated rules with respective axioms: ABS and the object language counterpart of $=^Q$. Proofs of these axioms, as well as derivability of our rules in GPC enriched with axiomatic sequents expressing ABS and $=^Q$ are straightforward and similar to proofs from Theorem 1. Instead we will show that although we have neither $(Ext)$ nor $(AV)$ as primitive rules they are derivable in such a system for stratified $\varphi$.

\begin{lemma}Derivability of $(Ext)$ and $(AV)$
\end{lemma}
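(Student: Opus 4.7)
The plan is to derive $(Ext)$ by lifting the given premises through the abstraction rules and closing with $(\Rightarrow\!=)$, and to derive $(AV)$ by first proving the identity $\Rightarrow\{x:\varphi(x)\}=\{y:\varphi(y)\}$ outright, then applying $(Cut)$ against the hypothesis. In both cases the argument is a proof-theoretic mirror of the familiar axiomatic derivations of EXT and AV from ABS plus the Quinean definition of $=$.

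For $(Ext)$ I would start from the two premises $\varphi(a),\Gamma\Rightarrow\Delta,\psi(a)$ and $\psi(a),\Gamma\Rightarrow\Delta,\varphi(a)$ (with $a$ the fresh parameter supplied by the rule). One application of $(Abs\!\Rightarrow)$ turns the antecedent occurrence of $\varphi(a)$ into $a\in\{x:\varphi(x)\}$, and one application of $(\Rightarrow\!Abs)$ turns the succedent occurrence of $\psi(a)$ into $a\in\{x:\psi(x)\}$, giving $a\in\{x:\varphi(x)\},\Gamma\Rightarrow\Delta,a\in\{x:\psi(x)\}$; the second premise is treated symmetrically. Because $a$ remains fresh in $\Gamma,\Delta$, the eigenvariable condition on $(\Rightarrow\!=)$ is satisfied, and a single application of that rule produces $\Gamma\Rightarrow\Delta,\{x:\varphi(x)\}=\{x:\psi(x)\}$.

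For $(AV)$ the key step is to prove $\Rightarrow\{x:\varphi(x)\}=\{y:\varphi(y)\}$ with no side hypotheses. By $(\Rightarrow\!=)$ this reduces to the two sequents $a\in\{x:\varphi(x)\}\Rightarrow a\in\{y:\varphi(y)\}$ and its converse. Using $(Abs\!\Rightarrow)$ on the antecedent and $(\Rightarrow\!Abs)$ on the succedent, the first reduces to $\varphi[x/a]\Rightarrow\varphi[y/a]$, and this is literally the axiom $\varphi[x/a]\Rightarrow\varphi[x/a]$, because $\varphi(x)$ and $\varphi(y)$ are alphabetic variants so the two substitution results are the same string; the converse direction is identical. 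A single $(Cut)$ against the hypothesis $\{x:\varphi(x)\}=\{y:\varphi(y)\},\Gamma\Rightarrow\Delta$ then yields $\Gamma\Rightarrow\Delta$.

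The only subtlety worth checking is the stratification side condition on $(Abs\!\Rightarrow)$ and $(\Rightarrow\!Abs)$, but this poses no real obstacle: the abstracts $\{x:\varphi(x)\},\{x:\psi(x)\},\{y:\varphi(y)\}$ are assumed to be legitimate terms of the language, so their defining formulae are already stratified by hypothesis. I do not expect any technical difficulty beyond keeping the bookkeeping of the abstraction rules and the freshness of $a$ in $(\Rightarrow\!=)$ straight.
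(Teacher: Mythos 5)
Your derivation of $(Ext)$ is exactly the paper's: push the two premises through $(Abs\Rightarrow)$ and $(\Rightarrow Abs)$ to convert $\varphi(a),\psi(a)$ into the membership atoms $a\in\{x{:}\varphi(x)\}$, $a\in\{x{:}\psi(x)\}$, then close with $(\Rightarrow =)$ using the freshness of $a$. The paper leaves $(AV)$ as ``similar,'' and your filling-in --- prove $\Rightarrow\{x{:}\varphi(x)\}=\{y{:}\varphi(y)\}$ from the alphabetic-variant identity $\varphi[x/a]=\varphi[y/a]$ and then cut --- is the intended completion, so the proposal is correct and matches the paper's approach.
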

	
\begin{proof}:
	
\begin{scriptsize}
	
\begin{prooftree}
	\AxiomC{$\varphi(a), \Gamma \Rightarrow \Delta, \psi(a)$}
	\LeftLabel{$(Abs\Rightarrow Abs)$}
	\UnaryInfC{$a\in\{x:\varphi(x)\}, \Gamma\Rightarrow\Delta, a\in \{x:\psi(x)\}$}
	\AxiomC{$\psi(a), \Gamma\Rightarrow \Delta, \varphi(a)$}
	\UnaryInfC{$a\in\{x:\psi(x)\}, \Gamma\Rightarrow\Delta, a\in \{x:\varphi(x)\}$}
	\LeftLabel{$(\Rightarrow =)$}
	\BinaryInfC{$\Gamma\Rightarrow\Delta, \{x:\varphi(x)\}=\{x:\psi(x)\}$}
\end{prooftree}
\end{scriptsize}

\1

The proof of $(AV)$ or alternatively, of $(ExtAV)$ is similar. \qed
\end{proof}

\1  

But the rules $(\Rightarrow =)$ and $(=\Rightarrow)$
are not sufficient for obtaining the complete characterisation of identity in NF. In particular they are not sufficient for the case corresponding to the specific instance of LL expressed by the axiom $ExtAx'$ Note that in general we must be able to prove:

\begin{enumerate}
	\item $t=t', t''=t' \Rightarrow t=t''$
	\item $t=t', t''\in t \Rightarrow t''\in t'$
	\item $t=t', t\in t'' \Rightarrow t'\in t''$
	\end{enumerate}

With case 1 there is no problem; it is derivable by $(\Rightarrow =), (=\Rightarrow)$, similarly as other properties of =, including reflexivity and symmetry.
The case 2 would be provable by $(=\Rightarrow)$ provided instead of $b$ we are allowed to use any term $t''$. So this case is problematic and needs reformulation of the rules which in general destroys the subformula property and may be troublesome in proving the cut elimination theorem. The case 3 corresponds exactly to $ExtAx'$ and requires a separate rule which possibly covers also the case 2. 
To avoid troubles we might follow the general solution introduced for GS and use the rule $(2LL)$ as two-premiss right-sided rule but it does not work since $(Abs \Rightarrow)$ introduces an $\in$-atom as a principal formula in the antecedent. As a result while proving cut elimination we cannot make a reduction of the following cut instance:

\begin{prooftree}
	
	\AxiomC{$\Gamma\Rightarrow\Delta, t=t'$}
	\AxiomC{$\Gamma \Rightarrow \Delta, t'\in \{x:\varphi\}$}
	\LeftLabel{$(2LL)$}
	\BinaryInfC{$\Gamma \Rightarrow \Delta, t\in\{x:\varphi\}$}
	\AxiomC{$\varphi(t), \Pi \Rightarrow \Sigma$}
	\RightLabel{$(Abs\Rightarrow)$}
	\UnaryInfC{$t\in \{x:\varphi\}, \Pi \Rightarrow \Sigma$}
	\RightLabel{$(Cut)$}
	\BinaryInfC{$\Gamma, \Pi \Rightarrow \Delta, \Sigma$}
	
\end{prooftree}

It seems that in the presence of $(Abs\Rightarrow)$ and $(\Rightarrow Abs)$ the only solution is to add a  3-premiss version of LL:

\begin{prooftree}
	
	\AxiomC{$\Gamma\Rightarrow\Delta, t=t'$}
	\AxiomC{$\Gamma \Rightarrow \Delta, \varphi(t)$}
	\AxiomC{$\varphi(t'), \Gamma \Rightarrow \Delta$}
	\LeftLabel{$(3LL)$}
	\TrinaryInfC{$\Gamma \Rightarrow \Delta$}
	
\end{prooftree}

\n where $\varphi(t)$ and $\varphi(t')$ are either $t''\in t$ and $t''\in t'$ or $t\in t''$ and $t'\in t''$.  

Summing up we obtain a system GSNF which adds to GPC the following rules: $(=\Rightarrow), (\Rightarrow =), (Abs\Rightarrow), (\Rightarrow Abs)$ and $(3LL)$ ($(Ref)$ is derivable).

\begin{theorem}
	GSNF is an adequate formalisation of NF.
\end{theorem}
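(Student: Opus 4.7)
The plan is to establish the equivalence by showing two directions: every axiom of NF is derivable in GSNF, and every rule of GSNF is admissible in axiomatic NF formulated over CFOL (or its pure variant) extended with the three NF principles.

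For the first direction, I would derive each NF principle. The abstraction axiom $ABS$ follows directly: from $a\in\{x:\varphi(x)\}\Rightarrow a\in\{x:\varphi(x)\}$, two applications of $(Abs\Rightarrow)$ and $(\Rightarrow Abs)$ to a suitable axiomatic leaf yield both halves of the biconditional $a\in\{x:\varphi(x)\}\leftrightarrow \varphi[x/a]$, and then $(\Rightarrow\leftrightarrow)$ and $(\Rightarrow\forall)$ close the derivation (with $\varphi$ stratified). The Quinean identity biconditional corresponding to $=^Q$ is obtained analogously from $(\Rightarrow =)$ and $(=\Rightarrow)$. The extensionality axiom $ExtAx'$ is obtained by $(3LL)$ taking $\varphi(t)$ to be $t\in t''$ and $\varphi(t')$ to be $t'\in t''$; reflexivity, symmetry and transitivity of $=$ likewise fall out from $(\Rightarrow=)$, $(=\Rightarrow)$ and $(3LL)$ applied to standard sequents, and the remaining converse direction of $ExtAx$ is obtained by a similar use of $(3LL)$.

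For the converse direction, I would verify that every rule of GSNF is admissible in axiomatic NF. The rules $(\Rightarrow Abs)$ and $(Abs\Rightarrow)$ are justified by cut against the appropriate stratified instance of $ABS$; the rules $(\Rightarrow=)$ and $(=\Rightarrow)$ by cut against $=^Q$; and $(3LL)$ by cut against the relevant instance of $ExtAx'$ (noting that the case $\varphi(t)=t''\in t$ reduces to the $\varphi(t)=t\in t''$ case via symmetry of $=$ and a further application of $ExtAx'$, or is folded in directly). These are essentially the same kinds of derivations as in Theorem~1, adapted to the NF-specific axioms.

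The main obstacle is reconciling the restriction to parameters in the quantifier rules (since GSNF extends GPC rather than GPCI) with the presence of complex set-abstract terms in derivations. A natural remedy, following the strategy of Section~3.1, is to argue that an analogue of $(a\Rightarrow)$ is either derivable here (using the provability of $t=t$ for any set-abstract $t$, itself obtainable from $(\Rightarrow=)$) or else admissible via a Hintikka-style completeness argument restricted to the stratified fragment. A secondary technical point, which must be handled carefully but is not deep, is ensuring that every application of the abstraction rules in each derivation respects the stratification side condition, so that the translation between GSNF derivations and NF proofs never smuggles in an unstratified comprehension instance.
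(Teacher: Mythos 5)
Your proposal takes essentially the same route as the paper, which in fact states this theorem without an explicit proof and relies on the preceding remarks that the rules $(=\Rightarrow)$, $(\Rightarrow =)$, $(Abs\Rightarrow)$, $(\Rightarrow Abs)$ and $(3LL)$ are interderivable with the axioms $=^Q$, $ABS$ and $ExtAx'$ by arguments ``straightforward and similar to proofs from Theorem~1''---i.e.\ exactly the two-directional, cut-based equivalence you describe, with the stratification side condition carried along. Your additional concern about reconciling the pure quantifier rules of GPC with complex set-abstract terms is a legitimate point that the paper passes over in silence (GSNF is defined without any $(a\Rightarrow)$-analogue), so flagging it and sketching a remedy is a refinement rather than a divergence.
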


Moreover the cut elimination theorem can be proved for GSNF in a similar fashion as in \cite{Indrzejczak2020b} where similar solution was provided for sequent calculi for free description theories. Note however that the situation with the subformula property is even worse than in GS (GS') due to the presence of $(3LL)$. Is it possible to obtain a better formalisation of NF by means of Tennant's rules?

\subsection{The T-approach to NF}

If we want to apply the approach of Tennant to NF we have = as a primitive predicate not only present in the language but already characterised by specific rules so we start with GPCI and add the following 
Tennant's-style rules:

\begin{prooftree}
	\AxiomC{$\varphi(a), \Gamma \Rightarrow \Delta, a\in t$}
	\AxiomC{$a\in t, \Gamma \Rightarrow \Delta, \varphi(a)$}
	\LeftLabel{$(\Rightarrow :)$}
	\BinaryInfC{$\Gamma \Rightarrow \Delta, t=\{x:\varphi(x)\}$}
\end{prooftree}


\begin{prooftree}
	\AxiomC{$\Gamma \Rightarrow \Delta, \varphi(b)$}
	\AxiomC{$b\in t, \Gamma\Rightarrow \Delta$}
	\LeftLabel{$(:\Rightarrow)$}
	\BinaryInfC{$t=\{x:\varphi(x)\}, \Gamma\Rightarrow\Delta$}
\end{prooftree}

\begin{prooftree}
	\AxiomC{$\Gamma \Rightarrow \Delta, b\in t$}
	\AxiomC{$\varphi(b), \Gamma\Rightarrow \Delta$}
	\LeftLabel{$(:\Rightarrow)$}
	\BinaryInfC{$t=\{x:\varphi(x)\}, \Gamma\Rightarrow\Delta$}
\end{prooftree}

\n where $a$ is not in $\Gamma, \Delta, \varphi$, $t$ is any term and $\varphi$ is stratified.

Note that $(Ext)$ and $(AV)$ are derivable which follows from the proofs of EXT and AV presented in section 3.2. As we noticed there, also the axiom ABS is provable, so we do not need special rules $(Abs\Rightarrow), (\Rightarrow Abs)$ too.
We do not need to care even about the axiom $ExtAx$ since it is  provable:

\begin{tiny}
\begin{prooftree}
	
	\AxiomC{$c\in a\leftrightarrow c\in b, c\in a\Rightarrow\Delta, c\in b$}
	\LeftLabel{$(\forall\Rightarrow )$}
	\UnaryInfC{$\forall z(z\in a\leftrightarrow z\in b), c\in a \Rightarrow c\in b$}
	\AxiomC{$c\in a\leftrightarrow c\in b, c\in b\Rightarrow c\in a$}
	\UnaryInfC{$\forall z(z\in a\leftrightarrow z\in b), c\in b \Rightarrow c\in a$}
	\LeftLabel{$(\Rightarrow :)$}
	\BinaryInfC{$\forall z(z\in a\leftrightarrow z\in b) \Rightarrow a =\{x:x\in b\}$}
	\AxiomC{$c\in b \Rightarrow c\in b$}
	\RightLabel{$(\Rightarrow :)$}
	\UnaryInfC{$\Rightarrow b =\{x:x\in b\} $}
	\LeftLabel{$(2LL)$}
	\BinaryInfC{$\forall z(z\in a\leftrightarrow z\in b) \Rightarrow a = b$}
	\LeftLabel{$(\Rightarrow\rightarrow )$}
	\UnaryInfC{$\Rightarrow\forall z(z\in a\leftrightarrow z\in b)\rightarrow a=b$}
	\LeftLabel{$(\Rightarrow\forall)$}
	\UnaryInfC{$\Rightarrow\forall xy(\forall z(z\in x\leftrightarrow z\in y)\rightarrow x=y)$}
\end{prooftree}
\end{tiny}

It seems that T-approach is better than S-approach to NF since it is more economical. However, if we think about cut elimination we must consider carefully the problem of primitive rules for identity. Although we first stated that we add the special Tennant's-style rules to GPCI and we used $(2LL)$ in the above proof it seems that we cannot keep $(2LL)$ since in general we face the same problem with cut elimination as in the case of S-system illustrated in subsection 4.1. To prove the cut elimination theorem we must again either generally replace $(2LL)$ with $(3LL)$ or to follow the strategy introduced in \cite{Indrzejczak2021c} and separate the rules for LL dealing with special cases of atomic formulae. One possibility is to keep:

\begin{prooftree}
	
	\AxiomC{$\Gamma\Rightarrow \Delta, t=t'$}
	\AxiomC{$\Gamma\Rightarrow \Delta, \varphi(t)$}
	\LeftLabel{$(2LL')$}
	\BinaryInfC{$\Gamma\Rightarrow\Delta, \varphi(t')$}
\end{prooftree} 

\n for $\varphi$ being $\in$-atom and restrict $(3LL)$ only to =-atoms:

\begin{prooftree}
	
	\AxiomC{$\Gamma\Rightarrow \Delta, t=t'$}
	\AxiomC{$\Gamma\Rightarrow \Delta, t=t''$}
	\AxiomC{$t'=t'', \Gamma\Rightarrow\Delta$}
	\LeftLabel{$(3LL')$}
	\TrinaryInfC{$\Gamma\Rightarrow\Delta$}
\end{prooftree} 

This way we obtain a system GTNF which adds to GPC the rules: $(:\Rightarrow), (\Rightarrow :), (2LL'), (3LL'), (Ref)$. $(2LL')$ deals only with $\in$-atoms and all properties of identity are derivable by $(Ref)$ and $(3LL)$. 

\begin{theorem}
	GTNF is an adequate formalisation of NF.
\end{theorem}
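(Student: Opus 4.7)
The plan is to adapt the strategy of Theorem~1 and argue in two directions. First I would verify that every axiom of NF is provable in GTNF; second, I would show that every rule of GTNF is derivable once GPC is augmented with NF's axioms, namely $ExtAx$ together with $ABS$ for stratified $\varphi$ and the standard axioms governing $=$.

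For the forward direction, much of the work is already displayed in the excerpt. $ExtAx$ has been derived explicitly using $(\Rightarrow\!:)$, $(2LL')$ and quantifier rules of GPC. The abstraction axiom $ABS$ would be obtained by starting from the axiomatic leaves $\varphi[y/c]\Rightarrow\varphi[y/c]$ and $c\in\{y:\varphi(y)\}\Rightarrow c\in\{y:\varphi(y)\}$, applying $(:\!\Rightarrow)$ in both forms to get both halves of the biconditional, eliminating the trivial identity by $(Ref)$, and closing with $(\Rightarrow\!\leftrightarrow)$ and $(\Rightarrow\!\forall)$. The Quine-style axiom $ExtAx'$ is a direct instance of $(2LL')$, since $\cdot\in t''$ is an $\in$-atom; symmetry and transitivity of $=$ follow straightforwardly from $(3LL')$, while reflexivity is $(Ref)$ itself.

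For the converse direction, each GTNF rule must be derived in the axiomatic system. The rules $(:\!\Rightarrow)$ and $(\Rightarrow\!:)$ correspond to the Hintikka-style principle $t=\{x:\varphi(x)\}\leftrightarrow\forall x(\varphi(x)\leftrightarrow x\in t)$; their syntactic equivalence with such a principle has already been demonstrated in Section~3.2 (taking $R$ to be $\in$), and this biconditional in turn follows in NF from $ABS$ combined with $ExtAx$. The atomic rules $(2LL')$ and $(3LL')$ reduce to the Leibniz principle restricted to atomic formulae and to transitivity of $=$, both standardly available.

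The main obstacle, I expect, lies in verifying that the partition of Leibniz' Law between $(2LL')$ (handling $\in$-atoms) and $(3LL')$ (handling $=$-atoms) together with $(Ref)$ suffices to recover LL in full generality for the NF-language — that is, that every instance of LL required for the derivation of NF's theorems is obtainable from some combination of these. This is delicate once complex set-abstract terms appear inside $(:\!\Rightarrow)$, since the stratification side-condition on the principal formula must be tracked through the substitutions performed by LL. I would handle this by a structural induction on the atomic formula occurring in the LL instance, exploiting the fact that the only predicates in the language are $\in$ and $=$ so that the two restricted LL-rules jointly exhaust the base cases; the inductive step then reduces to the standard argument used for Lemma~\ref{leibnizandsubstitution}(1).
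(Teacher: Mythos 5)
Your two-directional equivalence argument is exactly the template the paper uses (it is the strategy of Theorem~1, and the paper's own justification of Theorem~6 is left implicit in the surrounding derivations of $ExtAx$, ABS via the instantiated Hintikka axiom, and the discussion of the identity rules), so in substance you and the paper coincide. One small correction: the displayed derivation of $ExtAx$ closes with the unrestricted $(2LL)$ to pass from $a=\{x:x\in b\}$ and $b=\{x:x\in b\}$ to $a=b$, and since $a=b$ is an $=$-atom this step is \emph{not} an instance of $(2LL')$ as you claim; in GTNF proper it must be recast as an application of $(3LL')$ (which is precisely why the paper retains $(3LL')$ for $=$-atoms). With that repair, and your observation that the restricted LL rules jointly cover the two atom shapes of the NF language, the rest of your sketch --- deriving ABS by instantiating $(:\Rightarrow)$ at $t=\{x:\varphi(x)\}$ and discharging the trivial identity by $(Ref)$, and recovering the rules from HA with $R$ taken to be $\in$ --- matches the paper's route.
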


The cut elimination theorem is provable for GTNF as well. Unfortunatelly, the situation with the subformula property is similar to that in the system GSNF from the preceding subsection.
However, there are possible some simplifications obtained by reduction of the applications of $(3LL')$ if 
at least two of $t, t', t''$ are parameters.
Consider the cases with at most one term $t$ complex:

\begin{enumerate}
\item $a=b, a=c \Rightarrow b=c$
\item $t=b, t=c \Rightarrow b=c$
\item $a=t, a=b \Rightarrow t=b$
\item $a=b, a=t \Rightarrow b=t$
\end{enumerate} 

$(2LL')$ may be modified to cover identities from case 1 and 2:

\begin{prooftree}

\AxiomC{$\Gamma\Rightarrow \Delta, t=t'$}
\AxiomC{$\Gamma\Rightarrow \Delta, \varphi(t)$}
\LeftLabel{$(2LL'')$}
\BinaryInfC{$\Gamma\Rightarrow\Delta, \varphi(t')$}
\end{prooftree} 

\n for $\varphi(t')$ being $\in$-atom or =-atom of the form $b=c$  (a third term in the premisses may be complex or a parameter).
For cases 3 and 4 we may add the rules:

\begin{prooftree}
	
	\AxiomC{$\Gamma\Rightarrow \Delta, a=t$}
	\AxiomC{$t=b, \Gamma\Rightarrow \Delta$}
	\LeftLabel{$(Tr)$}
	\BinaryInfC{$a=b, \Gamma\Rightarrow\Delta$}
\end{prooftree} 

or

\begin{prooftree}
	
	\AxiomC{$\Gamma\Rightarrow \Delta, a=t$}
	\AxiomC{$b=t, \Gamma\Rightarrow \Delta$}
\LeftLabel{$(E)$}
	\BinaryInfC{$a=b, \Gamma\Rightarrow\Delta$}
\end{prooftree}

Any of them will do the task. For example, if we take $(E)$ we have a direct proof of 4 and the following proof of 3:

\begin{prooftree}
	
	\AxiomC{$a=t\Rightarrow a=t$}
	\AxiomC{$b=t\Rightarrow b=t$}
	\AxiomC{$\Rightarrow b=b$}
	\AxiomC{$t=b\Rightarrow t=b$}
	\RightLabel{$(3LL')$}
	\TrinaryInfC{$b=t\Rightarrow t=b$}
	\RightLabel{$(E)$}
	\BinaryInfC{$a=t, a=b\Rightarrow t=b$}
\end{prooftree}

As a result we have to keep $(3LL')$ only for all cases where
at least two of $t, t', t''$ are complex terms at the price of adding $(Tr)$ or $(E)$.
Let us call such a modified system GTNF'.

\section{Conclusion}

We have provided a proof theoretic treatment of the general theory of tfos introduced independently by several authors (S-theory), and proposed a modification of a different approach (T-theory) in a way which allows us to compare their relative strength. Moreover, we examined the ways in which both approaches may be extended to cover set theory NF of Quine. All obtained sequent systems satisfy the cut elimination theorem,  but do not satisfy the subformula property. Hence, in the case of the systems for NF, we cannot obtain a syntactical consistency proof on the basis of the cut elimination theorem, because of the rules like $(3LL)$. Still these systems, in particular a system GTNF described in the last subsection, allow us to keep a stricter control over the construction of proof. 

The natural next step of this research is connected with the application of, possibly modified, systems GS, GS', or (suitably restricted) rules of Tennant's approach, to other kinds of term-forming operators, and careful examination of their specific features. 

Eventually it is also interesting to investigate if the obtained systems allow us to prove other desirable properties in constructive way. One of such important points is the interpolation theorem. Since it was proved semantically for the general S-theory in \cite{DaCosta80}, it is an important task to find a constructive proof as well.
However, the method of split-sequents due to Maehara, which is usually applied in the setting of sequent calculi, fails for the presented systems since it does not work with rules like $(a\Rightarrow)$. The problem is connected with the fact that the complex term occuring in the active formula in the premiss may contain some predicates which do not occur in the rest of the respective division of a split-sequent but occur in the interpolant (and of course in the other division of a split-sequent). In this case the interpolant of the premiss fails to be an interpolant of the conclusion, where the active formula is deleted. Only the weaker form of interpolation can be proved in which we require that interpolants have only  parameters (but not predicates) common to both divisions of the split-sequent. It is an open problem if such difficulties can be overcome.

\end{document}